\newtheorem{theorem}{Theorem}
\newtheorem{definition}{Definition}
\newtheorem{proposition}[theorem]{Proposition}
\newtheorem{conjecture}[theorem]{Conjecture}
\title{Centralizer-like Subgroups Associated with the $n$-Engel Words Inside of Direct Product Groups}
\author{Bridget Lee \and Maggie Reardon \and Faculty Mentor: Dr. Dandrielle Lewis}
\begin{document}

\maketitle

\begin{abstract}
\noindent This research provides a characterization of centralizer-like subgroups associated with the $n$-Engel word in a direct product of groups. Specifically, properties of the set of right $n$-Engel elements inside of direct products are explored. A proof is given to demonstrate the equivalence between the set of right $n$-Engel elements of a direct product of two groups and a direct product of the set of right $n$-Engel elements of each direct factor. This work was inspired by the study of centralizer-like subgroups in \cite{Kappe}. We present additional questions explored during this project, and we propose future research possibilities.
\end{abstract}

\section{Introduction}
This article presents two characterizations of centralizer-like subgroups associated with the $n$-Engel word in $G \times H$, the direct product of groups $G$ and $H$.  These characterizations are provided in the context of centralizer-like subgroups introduced in \cite{Kappe}.\\

\noindent The characterizations, presented and justified in Section $4$, were motivated by the result in Section $3$, Proposition $1$  in Section $2$, \cite{Kappe}, and Proposition $2$ in Section $2$ \cite{McClellan}.  This work is tedious, and the proofs rely heavily on understanding the definitions and background provided in Section $2$.\\

\noindent In Section $5$, we present a conjecture with attempted proofs that would extend our characterizations, if proved successfully in the future.  We include these attempts in this article to provide insight for future interest and suggestions for future projects.  We also provide a detailed example of the direct product of $K_4$ with $S_3$ for which our conjecture holds, where $K_4$ is the Klein four-group and $S_3$ is the symmetric group of order six.  The example is a beautiful presentation of ideas presented in this article along with the understanding of metabelian and solvable groups.\\

\noindent Otherwise the notation is standard.  A standard reference for solvable groups and metabelian groups is \cite{Doerk} and \cite{Huppert}.

\section{Terminology and Background}

Specific definitions and properties regarding groups are introduced in this section to comprehend the mathematical concepts pertaining to centralizer-like subgroups associated with the $n$-Engel word in following sections.  It is understood that the reader has a basic knowledge of abstract algebra, specifically group theory.  We begin with the definition of a commutator. 
\begin{definition}
    Let $G, H$ be a groups and consider the direct product group $G \times H$. Let $(x,y),(g,h) \in G \times H$. A \textbf{commutator} is defined by $$[(x,y)\,,\,(g,h)] = (x,y)^{-1}(g,h)^{-1}(x,y)(g,h).$$
    \textbf{Note:} When $[(x,y)\,,\,(g,h)] = (x,y)^{-1}(g,h)^{-1}(x,y)(g,h) = (1,1)$, $(x,y)$ and $(g,h)$ are said to \textbf{commute}.
\end{definition}
\noindent The definition of a commutator can be used to understand the definition of an $n$-Engel word, but first, a word must be defined.
\begin{definition}
    Let $G \times H$ be a group, a \textbf{word} \cite{McClellan} is a combination of elements in $G \times H$ with the group operation.
\end{definition}
\begin{definition}
    Let $G \times H$ be a group, let $(x,y),(g,h) \in G \times H$, and let $n \in \mathbb{N}$. An \textbf{$n$-Engel word} \cite{McClellan} is defined as $$\varepsilon_n ((x,y),(g,h)) = [(x,y), \text{ }_n (g,h)] = [[(x,y)\,,\,_{n-1} (g,h)]\,,\,(g,h)].$$
\end{definition}
\noindent Thus an $n$-Engel word can be written as a nested commutator. Expanding the commutator of an $n$-Engel word follows the definition of a commutator.
To better understand Definition 1 and Definition 3 we provide an example. \\
\textbf{Example 1.} \textit{Consider the dihedral group of order 8 $$D_8 = \langle r, s \text{ } | \text{ } r^4 = s^2 = 1, r\text{ } s\text{ }r = s\rangle = \{1, r, r^2, r^3, s, rs, r^2s, r^3s\}.$$
A commutator in this group is given by $[r, s] = r^{-1}s^{-1}rs = s$. Now we can evaluate the 3-Engel word of $r$ and $s$ as $\varepsilon_3(r, s) = [[[r, s], s], s] = r^3\text{ }s\text{ }r\text{ }s\text{ }r^3\text{ }s\text{ }r\text{ }s\text{ }r^3\text{ }s\text{ }r\text{ }s\text{ }r^3\text{ }s\text{ }r\text{ }s \, = \, 1$.}\\

\noindent In the following sections we utilize a proposition from \cite{Kappe}, by Kappe and Ratchford, that introduces properties of centralizer-like subgroups associated with the $n$-Engel word in groups. Before we introduce the proposition, definitions for conjugation, normal closure, and multiple sets need to be stated.
\begin{definition}
    Let $(g,h)$ and $(x,y)$ be elements of some group $G \times H$. Then $(g,h)$ \textbf{conjugated} by $(x,y)$ is denoted as $(g,h)^{(x,y)}$ and this can be expanded in the following way $$(g,h)^{(x,y)} = (x,y)^{-1}(g,h)(x,y).$$
\end{definition}
\begin{definition}
    The \textbf{normal closure} of $(g,h)$ in a group, $G \times H$, denoted $(g,h)^{G \times H}$, is the smallest normal subgroup that contains the set. Then $(g,h)^{G \times H}$ is defined as 
    \begin{align*}
        (g,h)^{G \times H} =& \{ (g,h)^{(x,y)} \, |\, (x,y) \in G \times H\} \\
        =& \{(x,y)^{-1}(g,h)(x,y) \, | \, (x,y) \in G \times H\}.
    \end{align*}
\end{definition}

\begin{definition}
    The \textbf{right centralizer-like subgroups associated with the 1-Engel word inside of a group $G \times H$, with an element $(g,h)$}, denoted $E_1^*(G \times H, (g,h))$, is defined as
     $$E_1^*(G \times H, (g,h)) = \{(a,b) \in G \times H \, | \, [(x,y)(a,b)\,,\,(g,h)] = [(x,y)\,,\,(g,h)] \text{ } \forall \,(x,y) \in G \times H\}.$$
     Notice \textbf{the left centralizer-like subgroup associated with the 1-Engel word inside of a group $G \times H$, with an element $(g,h)$}, denoted $^*E_1(G \times H, (g,h))$ is defined as \\
 $$^*E_1(G \times H, (g, h)) = \{(a,b) \in G \times H \text{ } | \text{ } [(a,b)(x,y)\,,\,(g,h)] = [(x,y)\,,\,(g,h)] \text{ } \forall \, (x,y) \in G \times H\}.$$
\end{definition}

\noindent 

\noindent The asterisk in the notation corresponds to the left or right side depending on which side the element $(a,b)$ is absorbed on. Thus each element in $G \times H$ will have both left and right centralizer-like subgroups associated with the 1-Engel word of that group. \newline
\noindent The two sets introduced in Definition 6 can be defined more generally without a specific element  $(g,h)$. If this were the case, the two set definitions would include a ``for all $(g,h) \in G \times H$" statement.

\begin{definition}
    The \textbf{set of right 1-Engel elements of a group $G \times H$, with an element $(g,h)$}, denoted $R_1(G \times H, (g,h))$ is defined as  $$R_1(G \times H,(g,h)) = \{(a,b) \in G \times H \text{ } | \text{ } [(a,b)\,,\,(g,h)] = (1,1)\}.$$
    In addition, the \textbf{set of right $n$-Engel elements of $G \times H$, with an element $(g,h)$} is defined by $$R_n(G \times H,(g,h)) = \{ (a,b) \in G \times H\text{ } | \text{ } [(a,b)\,,\,_n (g,h)] = (1,1)\}.$$
\end{definition}

\begin{definition}
    The \textbf{centralizer of an element $(g,h)$, in the group $G \times H$}, denoted $C_{G \times H}((g,h))$, is defined as
$$C_{G \times H}((g,h)) = \{(a,b) \in G \times H \text{ } | \text{ } [(a,b)\,,\,(g,h)] = (1,1)\}.$$
\end{definition}
\noindent Notice that the set of right 1-Engel elements of a group $G \times H$ with an element $(g,h)$ is equivalent to the centralizer of $(g,h)$ in $G \times H$.

\begin{definition}
    The \textbf{intersection of the set of right 1-Engel elements of a group $G \times H$, with the element $(g,h)$ conjugated by $(x,y)$}, denoted $\displaystyle \bigcap_{(x,y) \in G\times H}R_1(G \times H, (g,h)^{(x,y)})$, is defined as
\begin{align*}
    \displaystyle \bigcap_{(x,y) \in G \times H}R_1(G \times H, (g,h)^{(x,y)}) =& \{(a,b) \in G \times H \text{ } | \text{ } [(a,b)\,,\,(g,h)^{(x,y)}] = (1,1)\} \\
    =& \{(a,b) \in (g,h) \text{ } | \text{ } [(a,b)\,,\,(x,y)^{-1}(g,h)(x,y)] = (1,1)\}.
\end{align*}
\end{definition}

\begin{definition}
    \textbf{The centralizer of the normal closure of $(g,h)$ in the group $G \times H$}, denoted $C_{G \times H}((g,h)^{G \times H})$, is defined as 
$$C_{G \times H}((g,h)^{G \times H}) = \{(a,b) \in G \times H \text{ } | \text{ } [(a,b)\,,\,(g,h)^{G \times H}] = (1,1)\}$$ 
where $(g,h)^{G\times H}$ is the normal closure of $(g,h)$ in $G \times H$.
\end{definition}

\noindent The following two propositions inspired our results presented in this article. Proposition $1$ is by Kappe and Ratchford \cite{Kappe}, and Proposition $2$ is by McClellan and Tlachac \cite{McClellan}.
\begin{proposition}
    \cite{Kappe} Let $G$ be a group, $g \in G$ and $g^G$ the normal closure of $g$ in $G$. Then,
    \begin{enumerate}
        \item $^*E_1(G, g) = R_1(G, g) = C_G(g)$
        \item $E_1^*(G, g) = \displaystyle \bigcap_{x \in G}R_1(G, g^x) = C_G(g^G)$
        \item $E_1^*(G, g) \, \triangleleft \,G$ and $E_1^*(G, g) \subseteq \text{ }^*E_1(G, g)$
        \item $E_1^*(G, g) = R_1(G, g)$.
    \end{enumerate}
\end{proposition}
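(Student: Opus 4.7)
The plan is to treat the four claims as a cascade: first identify ${}^*E_1(G, g)$ with $R_1(G, g) = C_G(g)$ directly from definitions, then use a commutator calculation to turn the defining condition for $E_1^*(G, g)$ into an intersection of centralizers of conjugates of $g$, and finally deduce the normality and containment assertions from that intersection characterization. Two commutator identities do almost all of the work: $[xy, g] = [x, g]^y [y, g]$, and the tautology $(g^{-1})^x = (g^x)^{-1}$, so that $a$ centralizes $g^x$ iff it centralizes $(g^{-1})^x$.

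For Part 1, the equality $R_1(G, g) = C_G(g)$ is a direct definitional match, since $[a, g] = 1$ iff $ag = ga$. For ${}^*E_1(G, g) = R_1(G, g)$, one containment is obtained by specializing the defining equation $[ax, g] = [x, g]$ at $x = 1$, which forces $[a, g] = 1$. The reverse uses the identity above: once $[a, g] = 1$, we get $[ax, g] = [a, g]^x [x, g] = [x, g]$ for every $x \in G$.

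For Part 2, I would expand $[xa, g] = a^{-1} x^{-1} g^{-1} x a g$ and $[x, g] = x^{-1} g^{-1} x g$, set them equal, and cancel $g$ on the right to obtain $a^{-1}(x^{-1} g^{-1} x) a = x^{-1} g^{-1} x$. This says precisely that $a$ centralizes $(g^{-1})^x$, equivalently $g^x$, for every $x \in G$, which gives $E_1^*(G, g) = \bigcap_{x \in G} R_1(G, g^x)$. The second equality then follows from the general principle that centralizing a set is the same as centralizing the subgroup it generates, combined with Part 1: $C_G(g^G) = \bigcap_{x \in G} C_G(g^x) = \bigcap_{x \in G} R_1(G, g^x)$.

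Part 3 drops out of Part 2 almost for free: normality of $E_1^*(G, g) = C_G(g^G)$ is the standard fact that the centralizer of a normal subgroup is itself normal, and the containment $E_1^*(G, g) \subseteq {}^*E_1(G, g)$ comes by taking $x = 1$ in the intersection, yielding $E_1^*(G, g) \subseteq R_1(G, g) = {}^*E_1(G, g)$. Part 4 is where I anticipate the main obstacle: as literally stated it asserts $C_G(g^G) = C_G(g)$, which in a general group holds only when the centralizer of $g$ already absorbs every $G$-conjugate of $g$. Before attempting a proof I would want to re-examine the intended statement and the source convention, and consider whether the equality is meant to read $E_1^*(G, g) \subseteq R_1(G, g)$ (which is immediate from Parts 1 and 3), or whether it is meant as $E_1^*(G, g) = R_1(G, g^G)$ with $g^G$ viewed as the normalizing set (which would be an immediate restatement of Part 2 in the notation of Definition 7 extended to sets).
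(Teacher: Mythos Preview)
Your approach to Parts 1--3 is correct. The paper itself does not prove Proposition~1; it is quoted from \cite{Kappe}. The closest the paper comes is Theorem~5, which establishes the direct-product analogs of Parts 1 and 2 by brute-force expansion and term-by-term cancellation of the commutator expressions, rather than by invoking the identity $[xy,g]=[x,g]^{y}[y,g]$ as you do. Your route is cleaner and more conceptual; the paper's route is more elementary in that it requires no commutator calculus, only the definition.

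Your argument for Part 3 is the real dividend of your method. In Section~5.1.2 the paper attempts the direct-product version of Part~3 (Conjecture~6, property~2) by direct manipulation of $[(x,y)(g,h)^{-1}(a,b)(g,h),(u,v)]$ and gets stuck trying to force a commutation. Your observation that $E_1^*(G,g)=C_G(g^{G})$ is the centralizer of a normal subgroup, hence normal, bypasses all of that computation; the same argument works verbatim in $G\times H$ and would settle Conjecture~6, property~2.

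Your skepticism about Part 4 is well founded and mirrors the paper's own experience. As you note, combining Parts 1 and 2 turns Part 4 into the assertion $C_G(g^{G})=C_G(g)$, which fails in general: take $G=S_3$ and $g=(1\,2)$, so that $C_G(g)=\{e,(1\,2)\}$ while $g^{G}=S_3$ and $C_G(g^{G})=\{e\}$. The paper could not prove the direct-product analog either (Conjecture~6, property~1) and resorted to verifying it by hand for $K_4\times S_3$. Either Part~4 carries an unstated hypothesis from \cite{Kappe} or it has been transcribed inaccurately; your instinct to revisit the source before attempting a proof is the right one.
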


\begin{proposition}
\cite{McClellan} For groups $G$ and $H$, $g \in G$ and $h \in H$, we know the following are true:
\begin{enumerate}
    \item $R_2(G \times H) = R_2(G) \times R_2(H)$
    \item $R_2(G \times H, (g,h)) = R_2(G,g) \times R_2(H,h)$.
\end{enumerate}
\end{proposition}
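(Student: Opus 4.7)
The plan is to reduce both statements to the single observation that commutators in a direct product are computed coordinatewise. First I would verify the base identity
\[
[(a,b),(g,h)] \;=\; \bigl([a,g],\,[b,h]\bigr),
\]
which is immediate from Definition 1 since $(a,b)^{-1}(g,h)^{-1}(a,b)(g,h) = (a^{-1}g^{-1}ag,\,b^{-1}h^{-1}bh)$. Because $([a,g],[b,h])$ is itself an element of $G\times H$, I can apply this identity again to obtain
\[
\bigl[(a,b),\,_2(g,h)\bigr] \;=\; \bigl[\,[a,g],\,g\,\bigr]\text{ paired with }\bigl[\,[b,h],\,h\,\bigr] \;=\; \bigl([a,\,_2 g],\,[b,\,_2 h]\bigr).
\]
(A short induction gives the same statement for every $n$, which will be useful for later sections of the paper, but $n=2$ is all that Proposition 2 requires.)

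With this componentwise identity in hand, part (2) is essentially automatic. An element $(a,b)$ lies in $R_2(G\times H,(g,h))$ iff $[(a,b),\,_2(g,h)] = (1,1)$, iff $([a,\,_2 g],[b,\,_2 h]) = (1,1)$, iff $[a,\,_2 g]=1$ in $G$ and $[b,\,_2 h]=1$ in $H$, iff $a \in R_2(G,g)$ and $b \in R_2(H,h)$, iff $(a,b) \in R_2(G,g)\times R_2(H,h)$. Each step is an ``iff'', so both containments fall out of a single chain of equivalences.

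For part (1), I would interpret $R_2(G\times H)$ as $\bigcap_{(g,h)\in G\times H} R_2(G\times H,(g,h))$, and analogously for the factors. Using part (2), $(a,b) \in R_2(G\times H)$ iff for every choice of $g\in G$ and $h\in H$ we have $[a,\,_2 g]=1$ and $[b,\,_2 h]=1$. The key point is that $g$ and $h$ can be varied \emph{independently}: fixing any single $h$ and letting $g$ range over $G$ forces $a \in R_2(G)$, and symmetrically $b \in R_2(H)$. Conversely, if $a\in R_2(G)$ and $b\in R_2(H)$, the componentwise identity shows $[(a,b),\,_2(g,h)]=(1,1)$ for every $(g,h)$.

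The hard part is not really hard: it is simply making sure the componentwise commutator identity is proved cleanly, and, in part (1), making the ``vary one coordinate while fixing the other'' argument explicit so that the intersection of products really does coincide with the product of intersections. Once those two bookkeeping points are handled, Proposition 2 is a direct consequence of the structure of direct products and does not require any deeper machinery from \cite{Kappe}.
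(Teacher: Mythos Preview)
Your proposal is correct. The paper does not actually prove Proposition~2 itself---it is quoted from \cite{McClellan} as background---but the paper's own Theorem~4 establishes the general $n$ case, and its proof rests on exactly the same mechanism you use: the coordinatewise commutator identity $[(a,b),(g,h)]=([a,g],[b,h])$, iterated. The only organizational difference is that the paper treats parts (1) and (2) as parallel arguments (proving (1) by induction and then remarking that (2) is the same with a fixed element), whereas you prove (2) first and then obtain (1) by varying $g$ and $h$ independently; both routes are valid and yours makes the logical dependence between the two parts a bit more transparent.
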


\section{General Results}

Proposition $3$ allows the reader to write an $n$-variable word as a direct product of words.  This result motivated the proof of Theorem $4$.

\begin{proposition}
    For a group G with $x_1, \dots, x_i, x_{i+1}, \dots, x_n \in G$ we have $$w(x_1, \dots, x_i, x_{i+1}, \dots, x_n) = w(x_1, \dots, x_i) \times w(x_{i+1}, \dots, x_n)$$ where $w(x_1, \dots, x_n)$ denotes an $n$-variable word.
\end{proposition}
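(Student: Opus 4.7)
The plan is to prove this by structural induction on the word $w$, but first one has to settle a reading of the statement. As written it is ambiguous, because on the right-hand side the first $w$ is an $i$-variable word and the second is an $(n-i)$-variable word, so the two are not the same expression as the $n$-variable word on the left. In light of the paper's focus on direct products, and the way Proposition~3 is invoked to motivate Theorem~4, the intended meaning is that when the inputs are themselves elements of a direct product, evaluation of a word is coordinatewise: if $x_j = (a_j, b_j)$ for each $j$, then
\[
w\bigl((a_1,b_1),\ldots,(a_n,b_n)\bigr) \;=\; \bigl(w(a_1,\ldots,a_n),\, w(b_1,\ldots,b_n)\bigr),
\]
and the $\times$ in the statement should be read as the pairing in the direct product. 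I would begin by fixing this interpretation explicitly before launching into the argument, since without it the claim does not type-check.

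Next, I would set up structural induction on the syntactic build of $w$ using Definition~2: a word is any combination of the variables under the group operation (together with inversion). The base case is $w(x_1,\ldots,x_n) = x_j$, for which both sides reduce to the pair $(a_j, b_j)$ trivially. For the inductive step I would handle the two formation rules separately. If $w = u\cdot v$, then evaluating at pairs and applying the componentwise group law of the direct product gives
\[
w\bigl((a_1,b_1),\ldots\bigr) \;=\; u\bigl((a_1,b_1),\ldots\bigr)\, v\bigl((a_1,b_1),\ldots\bigr),
\]
and invoking the inductive hypothesis on $u$ and $v$ reassembles both coordinates into $\bigl(w(a_1,\ldots), w(b_1,\ldots)\bigr)$. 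The inverse case is analogous, using $(a,b)^{-1} = (a^{-1}, b^{-1})$ in $G\times H$.

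The main obstacle is not computational but interpretive: the honest difficulty lies in stating precisely what is being claimed so that the $\times$ symbol is meaningful. Once that is pinned down, the proof is essentially the observation that the direct product is the categorical product of groups, so every group-theoretic word is natural in its variables; the induction writes itself. A secondary point worth checking is that the result is strong enough in the form stated to support its intended use in Theorem~4, namely that an $n$-Engel word in $G\times H$ factors as the pair of $n$-Engel words in $G$ and $H$ respectively; this follows immediately by specializing $w$ to $\varepsilon_n$.
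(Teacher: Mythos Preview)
Your reading of the statement diverges from the paper's. In the paper, $w(x_1,\ldots,x_m)$ is taken to mean literally the product $x_1\cdots x_m$ in a single group $G$, and the symbol $\times$ is nothing more than the group operation of $G$; so Proposition~3 asserts only that $x_1\cdots x_n = (x_1\cdots x_i)(x_{i+1}\cdots x_n)$. The paper proves this by induction on $n$, appending one letter at a time---in effect a restatement of generalized associativity.

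Your interpretation---that a word evaluated at elements of a direct product splits coordinatewise---is both more substantive and closer to what is actually invoked in the proof of Theorem~4, where the paper writes $[(a,b),\,_k(u,v)] = ([a,\,_k u],[b,\,_k v])$. Your structural induction on the syntactic build of $w$ (variables, products, inverses) is the standard and correct argument for that fact. So your proof is sound for the statement you chose to prove, and that statement is the one doing the real work downstream; but it is not the statement the paper's own proof establishes. What your approach buys is the lemma Theorem~4 genuinely needs; what the paper's approach buys is brevity, at the cost of proving something essentially tautological and leaving the coordinatewise-evaluation step in Theorem~4 justified only by the phrase ``because we can write the nested commutators as words.''
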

\begin{proof}
     Let $G$ be a group, let $x_1, . . . , x_n \in G$. Let $w(x_1, . . . , x_i, x_{i+1}, . . . , x_n)$ be an $n$-variable word in G.We want to show that $w(x_1, \dots, x_i, x_{i+1}, \dots, x_n) = w(x_1, \dots, x_i) \times w(x_{i+1}, \dots, x_n)$ for all $n \geq 1$ where $1 \leq i \leq n$. \newline
    \underline{Base Case:} Let $n=1$. Trivially $w(x_1)=w(x_1)$ and so the base case holds.\\
    \underline{Induction Case:} Assume $w(x_1, \dots, x_i, x_{i+1}, \dots, x_k) = w(x_1, \dots, x_i) \times w(x_{i+1}, \dots, x_k)$ for some $k$ such that \\$1 \leq i \leq k \leq n$. Notice that $w(x_1, \dots, x_i, x_{i+1}, \dots, x_k) = x_1 \cdots x_ix_{i+1} \cdots x_k$. In addition, \\$w(x_1, \dots, x_i) = x_1 \cdots x_i$ and $w(x_{i+1}, \dots, x_k) = x_{i+1} \cdots x_k$. Thus it follows that $$x_1 \cdots x_ix_{i+1} \cdots x_k = w(x_1, \dots , x_i,x_{i+1}, \dots, x_k)= w(x_1, \dots, x_i) \times w(x_{i+1}, \dots, x_k).$$ We now want to show that $w(x_1, \dots, x_i, x_{i+1}, \dots, x_k , x_{k+1}) = w(x_1, \dots, x_i) \times w(x_{i+1}, \dots, x_k, x_{k+1})$. Observe the following
\begin{align*}
    w(x_1, \dots, x_i, x_{i+1}, \dots, x_k , x_{k+1}) &= x_1 \cdots x_ix_{i+1} \cdots x_kx_{k+1}\\
    &= (x_1 \cdots x_ix_{i+1} \cdots x_k)x_{k+1}\\
    &= w(x_1, \dots, x_i, x_{i+1}, \dots, x_k) w(x_{k+1}) &&\text{by the inductive hypothesis}\\
    &= w(x_1, \dots, x_i) \times w(x_{i+1}, \dots, x_k) w(x_{k+1}) &&\text{by the inductive hypothesis}\\
    &= w(x_1, \dots, x_i) \times w(x_{i+1}, \dots, x_k, x_{k+1}).
\end{align*}
Thus $w(x_1, \dots, x_i, x_{i+1}, \dots, x_k, x_{k+1}) = w(x_1, \dots, x_i) \times w(x_{i+1}, \dots, x_k, x_{k+1})$. Therefore $$w(x_1, \dots, x_i, x_{i+1}, \dots, x_n) = w(x_1, \dots, x_i) \times w(x_{i+1}, \dots, x_n)$$ for all $n \geq 1$ where $1 \leq i \leq n$.
\end{proof}

\section{The Main Theorems}

We seek two main characterizations.  The first characterization is of right $n$-Engel elements of a direct product of groups with the direct product of the set of right $n$-Engel elements of each direct factor.  The second characterization provides two equivalences
involving centralizer-like subgroups associated with the $1$-Engel word, right $1$-Engel elements, centralizers, and centralizers of normal closures in a direct product of groups.  We begin with our first characterization.
\begin{theorem}
     Let $G, H$ be groups and consider $G \times H$. Let $(g,h) \in G \times H$. Then we know that the following are true
     \begin{enumerate}
         \item $R_n(G \times H) = R_n(G) \times R_n(H)$
         \item $R_n(G \times H, (g,h)) = R_n(G,g) \times R_n(H,h)$.
     \end{enumerate}
\end{theorem}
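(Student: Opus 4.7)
The plan is to reduce both parts of Theorem $4$ to a single componentwise formula for the iterated commutator in $G \times H$, from which each statement will follow by a short argument.

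The key lemma I would prove by induction on $n$ is
\[
[(a,b),\,_n(g,h)] \;=\; \bigl([a,\,_n g],\,[b,\,_n h]\bigr).
\]
The base case $n = 1$ comes straight from the definition of a commutator together with the fact that multiplication and inversion in a direct product are componentwise, so that $[(a,b),(g,h)] = ([a,g],[b,h])$. For the inductive step, I would unfold $[(a,b),\,_{k+1}(g,h)]$ using Definition $3$ as $\bigl[[(a,b),\,_k(g,h)],(g,h)\bigr]$, replace the inner bracket using the inductive hypothesis, and then apply the base case once more to combine the two resulting pairs.

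From this lemma, part $(2)$ is immediate: $(a,b) \in R_n(G \times H,(g,h))$ iff $([a,\,_n g],[b,\,_n h]) = (1,1)$, which holds precisely when $[a,\,_n g] = 1$ and $[b,\,_n h] = 1$; that is, exactly when $(a,b) \in R_n(G,g) \times R_n(H,h)$. For part $(1)$, I would interpret $R_n(G \times H)$ as $\bigcap_{(g,h) \in G \times H} R_n(G \times H,(g,h))$, apply part $(2)$ to each intersectand, and then use the fact that the two coordinates can be quantified independently over $G$ and $H$ to collapse the intersection to $R_n(G) \times R_n(H)$.

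There is no serious obstacle to this approach; the argument is close in spirit to Proposition $2$ of McClellan and Tlachac, which it recovers when $n = 2$. The only step that demands any care is the inductive bookkeeping of nested brackets, since the depth of the $n$-Engel word grows with $n$ and one must keep track of where the base-case identity is being applied at each level of nesting.
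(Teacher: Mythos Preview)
Your proposal is correct, and at its core it rests on the same identity the paper uses: $[(a,b),\,_n(g,h)] = ([a,\,_n g],[b,\,_n h])$. The organization, however, differs in two ways worth noting. First, you isolate this identity as a standalone lemma proved by induction before touching either $R_n$-set, whereas the paper embeds the identity inside its inductive proof of part~(1), phrasing it through the somewhat roundabout word notation $[a,\,_k u] = w(a,\dots,u)$ coming from Proposition~3. Second, you prove part~(2) first and then derive part~(1) from it by writing $R_n(G \times H)$ as the intersection $\bigcap_{(g,h)} R_n(G \times H,(g,h))$ and quantifying the coordinates independently; the paper instead proves part~(1) directly by induction and then remarks that part~(2) follows by the same argument with $(g,h)$ fixed. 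Your route is cleaner and avoids the word-notation detour; the paper's route is marginally more self-contained in that it never needs to invoke the intersection interpretation of $R_n(G \times H)$.
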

\begin{proof}
    Let $G, H$ be groups and consider $G \times H$. Let $(g,h) \in G \times H$.
    \begin{enumerate}
        \item Consider $R_n(G \times H)$ to be the set of right $n$-Engel elements of the direct product group and, $R_n(G)$ to be the set of right $n$-Engel elements of $G$, and $R_n(H)$ to be the set of right $n$-Engel elements of $H$. We want to show that $R_n(G \times H) = R_n(G) \times R_n(H)$ for all $n \geq 1$. We proceed by induction. \\
        \underline{Base Case:} This is an exercise left to the reader. The logic used for the base case is the same as that used in the induction case.\\
        \underline{Induction Case:} Assume for some integer $k \geq 1$ we have $R_k(G \times H) = R_k(G) \times R_k(H)$. The $k$-Engel words are made up of elements from their respective groups, because of this the nested commutators can be written as words. Then let $[a\,,\,_k u] = w(a, \dots, u)$ and let $[b\,,\,_k v] = w(b, \dots, v)$ where $w(a, \dots, u)$ represents the word generated by the nested commutator $[a\,,\,_k u]$ and $w(b, \dots, v)$ represents the word generated by the nested commutator $[b\,,\,_k v]$. Because we can write the nested commutators as words we can also write $$[(a,b)\,,\,_k (u,v)] = [a\,,\,_k u] \times [b\,,\,_k v] = ( w(a, . . . , u), w(b, . . . , v) ).$$
        Now we want to show $R_{k+1}(G \times H) = R_{k+1}(G) \times R_{k+1}(H)$ Let $(a,b) \in R_{k+1}(G \times H)$ such that $[(a,b)\,,\,_{k+1} (u,v)] = (1,1)$ for all $u \in G \text{ and for all }$ $v \in H$. Then 
        \begin{align*}
            (1,1) &= [(a,b)\,,\,_{k+1} (u,v)]\\
            &= [ [ (a,b)\,,\,_k (u,v) ]\,,\,(u,v)]\\
            &= [(w(a, \dots, u), w(b, \dots, v))\,,\,(u, v)] &&\text{by the inductive hypothesis}\\
            &= (w(a, \dots, u), w(b, \dots, v))^{-1} (u, v)^{-1} (w(a, \dots, u), w(b, \dots,v) (u, v)\\
            &= (w(a, \dots, u)^{-1}, w(b, \dots, v)^{-1}) (u^{-1}, v^{-1}) (w(a, \dots, u), w(b, \dots, v)) (u,v)\\
            &= (w(a, \dots, u)^{-1}u^{-1}w(a, \dots, u) u, w(b, \dots, v)^{-1} v^{-1} w(b, \dots, v) v)
        \end{align*}
         Thus we have $w(a, \dots, u)^{-1} u^{-1} w(a, \dots, u) u = 1 \text{ and } w(b, \dots, v)^{-1}v^{-1} w(b, \dots, v) v = 1$. Consider the first equation in the previous sentence
         \begin{align*}
             1 &= w(a, . . . , u)^{-1} u^{-1} w(a, . . . , u) u\\
             &= [w(a, . . . , u)\,,\,u]\\
             &= [[a\,,\,_k u] , u] &&\text{by the inductive hypothesis}\\
             &= [a\,,\,_{k+1} u].
         \end{align*}
         The same can be done for $w(b, \dots, v)^{-1}v^{-1} w(b, \dots, v) v = 1$ which implies that $[b\,,\,_{k+1}v]=1$. Thus $(a,b) \in R_{k+1}(G) \times R_{k+1}(H)$ and so $R_{k+1}(G \times H) \subseteq R_{k+1}(G) \times R_{k+1}(H)$. Thus it follows that $R_{k+1}(G \times H) = R_{k+1}(G) \times R_{k+1}(H)$. Therefore $R_n(G \times H) = R_n(G) \times R_n(H)$ for all $n \geq 1$.
         \item The proof of $R_n(G \times H, (g,h)) = R_n(G,g) \times R_n(H,h)$ is the same as the proof of property 1; the difference lies in the sets. Instead of proving that $R_n(G \times H) = R_n(G) \times R_n(H)$ for all elements in the respective groups, proving $R_n(G \times H, (g,h)) = R_n(G,g) \times R_n(H,h)$ is for one specific element in each group. Replacing $u$ with $g$, $v$ with $h$, and removing the for all elements statements in the conditions for the sets results in the proof of property 2.
    \end{enumerate}
\end{proof}
\noindent Our second characterization follows. The side the asterisk is on dictates whether the group is the left or right, respectively, for the centralizer-like subgroups associated with the $1$-Engel word.

\begin{theorem}
Let $G$ and H be groups where $(u, v) \in G \times H$ and $(u, v)^{G \times H}$ is the normal closure of $(u, v)$ in $G \times H$. Then,

\begin{enumerate}
    \item $^*E_1(G\times H, (u, v)) = R_1(G \times H, (u, v)) = C_{G\times H}((u, v))$
    \item $E_1^*(G \times H, (u, v)) = \displaystyle \bigcap_{(x, y) \text{ }\in \text{ }G \times H}R_1(G \times H, (u, v)^{(x, y)}) = C_{G\times H}((u, v)^{G\times H})$.
\end{enumerate}
\end{theorem}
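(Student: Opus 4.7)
The shortest route is to recognize that Theorem 5 is exactly Proposition 1 applied to the group $G \times H$ with distinguished element $(u,v)$: Proposition 1 is stated for an arbitrary group and element, so its conclusions transfer verbatim by renaming. If a more self-contained derivation is preferred, the entire argument can be assembled from the standard commutator identities
\[
[xy, g] = [x,g]^{y}\,[y,g], \qquad [xa, g] = [x,g]^{a}\,[a,g],
\]
which hold in any group and thus in $G \times H$.

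For part 1, the equality $R_1(G \times H, (u,v)) = C_{G \times H}((u,v))$ is immediate from Definitions 7 and 8, since both sides consist of all $(a,b) \in G \times H$ with $[(a,b),(u,v)] = (1,1)$. For $^*E_1(G \times H, (u,v)) = R_1(G \times H, (u,v))$, I would show the inclusion $\subseteq$ by specializing $(x,y) = (1,1)$ in the defining equation $[(a,b)(x,y),(u,v)] = [(x,y),(u,v)]$, which collapses to $[(a,b),(u,v)] = (1,1)$. The reverse inclusion follows by substituting $[(a,b),(u,v)] = (1,1)$ into the first identity above, yielding $[(a,b)(x,y),(u,v)] = (1,1)^{(x,y)}[(x,y),(u,v)] = [(x,y),(u,v)]$ for every $(x,y)$.

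For part 2, the equality $\bigcap_{(x,y)} R_1(G \times H, (u,v)^{(x,y)}) = C_{G \times H}((u,v)^{G \times H})$ follows from Definition 5: $(u,v)^{G \times H}$ is generated (as a subgroup) by the conjugates $(u,v)^{(x,y)}$, and an element centralizes a subgroup iff it centralizes a generating set. The harder equality $E_1^*(G \times H, (u,v)) = C_{G \times H}((u,v)^{G \times H})$ is the main obstacle. To handle it, I would first specialize $(x,y) = (1,1)$ in the defining equation of $E_1^*$ to extract $[(a,b),(u,v)] = (1,1)$; feeding this into the second identity yields $[(x,y),(u,v)]^{(a,b)} = [(x,y),(u,v)]$, i.e., $(a,b)$ commutes with every commutator $[(x,y),(u,v)]$. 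Using the decomposition $(u,v)^{(x,y)} = (u,v) \cdot [(u,v),(x,y)]$, commutation of $(a,b)$ with both $(u,v)$ and $[(u,v),(x,y)]$ gives commutation with every conjugate $(u,v)^{(x,y)}$, hence with the full normal closure. The reverse inclusion is obtained by running the argument backward: commutation with $(u,v)^{(x,y)}$ and with $(u,v) = (u,v)^{(1,1)}$ yields commutation with $[(u,v),(x,y)] = (u,v)^{-1}(u,v)^{(x,y)}$, and the second identity then reassembles the defining equation of $E_1^*$.
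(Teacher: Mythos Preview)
Your proof is correct, and in fact more complete than the paper's, but the route you take is genuinely different.

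The paper proves part~1 by taking the defining equation $[(a,b)(x,y),(u,v)]=[(x,y),(u,v)]$, expanding both sides coordinate-by-coordinate in $G\times H$, and cancelling factors until $[(a,b),(u,v)]=(1,1)$ drops out; the reverse containment is declared to be ``the same steps in reverse.'' Part~2 is not proved at all in the paper---it is stated to be analogous and left to the reader. Your approach instead (i) observes at the outset that the whole theorem is Proposition~1 instantiated at the group $G\times H$, which is already a complete proof, and (ii) for the self-contained version, replaces the coordinate bookkeeping by the standard identities $[xy,g]=[x,g]^{y}[y,g]$ and $[xa,g]=[x,g]^{a}[a,g]$, together with the specialization $(x,y)=(1,1)$ to isolate $[(a,b),(u,v)]=(1,1)$. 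This is cleaner, avoids any use of the direct-product structure, and---unlike the paper---actually supplies an argument for part~2, including the nontrivial direction $E_1^*\subseteq C_{G\times H}((u,v)^{G\times H})$ via the decomposition $(u,v)^{(x,y)}=(u,v)\,[(u,v),(x,y)]$. What the paper's approach buys is that it needs no prior commutator calculus: everything is raw expansion, accessible to a reader who has only the definitions. What your approach buys is brevity, a proof of part~2, and the recognition that nothing here is special to direct products.
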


\begin{proof}
    \noindent Let $G$ and $H$ be groups. Let the following definitions hold,

\begin{enumerate}
    \item To start, we consider the first equality. We begin by proving $^*E_1(G \times H, (u,v)) \subseteq R_1(G \times H, (u,v))$. Let $(a, b)\in \text{ }^*E_1(G \times H, (u, v))$. Then using the techniques stated in the introduction it follows that
\begin{align*}
    [(a, b)(x, y)\,,\,(u, v)]  =& [(x, y)\,,\,(u, v)]\\
    ((a, b)(x, y))^{-1}(u, v)^{-1}(a, b)(x, y)(u, v) =& (x, y)^{-1}(u, v)^{-1}(x, y)(u, v) \\
    (x^{-1}, y^{-1})(a^{-1}, b^{-1})(u^{-1}, v^{-1})(a, b)(x, y)(u, v) =& (x^{-1}, y^{-1})(u^{-1}, v^{-1})(x, y)(u, v)\\
    (x^{-1}a^{-1}u^{-1}axu, y^{-1}b^{-1}v^{-1}byv) =& (x^{-1}u^{-1}xu, y^{-1}v^{-1}yv)\\
    (x^{-1}a^{-1}u^{-1}ax, y^{-1}b^{-1}v^{-1}by) =& (x^{-1}u^{-1}x, y^{-1}v^{-1}y)\\
    (a^{-1}u^{-1}ax, b^{-1}v^{-1}by) =& (u^{-1}x, v^{-1}y)\\
    (a^{-1}u^{-1}a, b^{-1}v^{-1}b) =& (u^{-1}, v^{-1}) \\
    (a^{-1}u^{-1}au, b^{-1}v^{-1}bv) =& (1, 1)\\
    (a^{-1}, b^{-1})(u^{-1}, v^{-1})(a, b)(u, v) =& (1, 1)\\
    (a, b)^{-1}(u, v)^{-1}(a, b)(u, v) =& (1, 1)\\
    [(a, b)\,,\,(u, v)] =& (1, 1)
\end{align*}
\noindent Therefore $(a, b) \in R_1(G \times H, (u, v))$. Thus $^*E_1(G \times H, (u, v)) \subseteq R_1(G \times H, (u, v))$. To prove\\ $R_1(G \times H, (u, v)) \subseteq  {^*}E_1(G\times H, (u, v))$, the exact same steps are applied, just in reverse order. Therefore $^*E_1(G \times H, (u, v)) = R_1( G \times H, (u, v))$. Next, notice $R_1(G \times H, (u, v))$ and $C_{G \times H}((u, v))$ are equivalent because they have the same set definition. Finally, through the transitive property, since $^*E_1(G \times H, (u, v)) = R_1(G \times H, (u, v))$ and $R_1(G \times H, (u, v)) = C_{G \times H}((u, v))$, then $^*E_1(G \times H, (u, v)) = C_{G \times H}((u, v))$. 
\item The proof of $E_1^*(G \times H, (u, v)) = \displaystyle \bigcap_{(x, y) \text{ }\in \text{ }G \times H}R_1(G \times H, (u, v)^{(x, y)}) = C_{G\times H}((u, v)^{G\times H})$ can be proved analogously to the proof above. In property 1 only one set containment needs to be proved as two of the sets have the same set definition. However, in property 2 two of the set containment's need to be proved in order to show the property holds generally. Exercise is left to reader.
\end{enumerate}
\end{proof}

\section{Continuing Research}

\noindent The second half of our research project was spent trying to find counterexamples for Conjecture $6$ and trying to prove them generally. Our attempts for proving these conjectures are detailed in this section along with an example of a specific direct product of groups for which our conjectures held.
\begin{conjecture}
Let $G \times H$ be a group, let $(u,v) \in G \times H$, and $(u,v)^{G \times H}$ the normal closure of $(u,v) \in G \times H$. Then
\begin{enumerate}
    \item $E_1^*(G \times H,(u,v)) = R_1(G \times H,(u,v))$.
    \item $E_1^*(G \times H, (u,v)) \triangleleft G \times H$ and $E_1^*(G \times H,(u,v)) \subseteq \,^*E_1(G \times H,(u,v))$
\end{enumerate}
\end{conjecture}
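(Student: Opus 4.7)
The plan is to prove Conjecture~6 by reducing it to the single-group results of Kappe and Ratchford (Proposition~1) through a componentwise decomposition of $E_1^*(G \times H, (u,v))$ that parallels the decomposition of $R_1(G \times H, (u,v))$ established in Theorem~4. The key first step is to prove the identity $E_1^*(G \times H, (u,v)) = E_1^*(G, u) \times E_1^*(H, v)$. This should follow from the fact that commutators in a direct product split coordinatewise:
\[
[(x,y)(a,b)\,,\,(u,v)] = ([xa, u],\, [yb, v]) \quad \text{and} \quad [(x,y)\,,\,(u,v)] = ([x, u],\, [y, v]).
\]
Equating these pairs coordinatewise shows that the defining condition $[(x,y)(a,b), (u,v)] = [(x,y), (u,v)]$ for all $(x,y) \in G \times H$ is equivalent to the pair of conditions $[xa, u] = [x, u]$ for all $x \in G$ and $[yb, v] = [y, v]$ for all $y \in H$, which are precisely the defining conditions for $a \in E_1^*(G, u)$ and $b \in E_1^*(H, v)$.

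Part~1 then follows by stringing together three equalities. By Proposition~1 (item~4) applied to each factor, $E_1^*(G, u) = R_1(G, u)$ and $E_1^*(H, v) = R_1(H, v)$, so $E_1^*(G \times H, (u,v)) = R_1(G, u) \times R_1(H, v)$. Theorem~4 (part~2) with $n = 1$ gives $R_1(G, u) \times R_1(H, v) = R_1(G \times H, (u,v))$. Chaining these yields $E_1^*(G \times H, (u,v)) = R_1(G \times H, (u,v))$.

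For Part~2, the decomposition combined with Proposition~1 (item~3) immediately gives normality: $E_1^*(G, u) \triangleleft G$ and $E_1^*(H, v) \triangleleft H$, so their direct product is normal in $G \times H$. The containment $E_1^*(G \times H, (u,v)) \subseteq {}^*E_1(G \times H, (u,v))$ then follows from Part~1 combined with Theorem~5 (item~1), since both subgroups coincide with $R_1(G \times H, (u,v))$.

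The principal obstacle is the componentwise decomposition in the first step: one must carry out the commutator computation carefully and verify that the universal quantifier over $(x,y) \in G \times H$ genuinely separates into independent quantifiers over $x \in G$ and $y \in H$ without any cross-coordinate interaction. Once that decomposition is in hand, both parts of the conjecture fall out essentially as corollaries of the known single-group results of Kappe and Ratchford together with Theorem~4.
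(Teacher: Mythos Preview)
Your componentwise decomposition $E_1^*(G \times H,(u,v)) = E_1^*(G,u) \times E_1^*(H,v)$ is correct, but it should make you suspicious rather than confident: Proposition~1 is stated for an \emph{arbitrary} group, so if all four items held as quoted you could simply apply them to the group $G \times H$ directly and be done---no decomposition is needed at all. That the paper leaves this as an open conjecture (and records only failed direct attempts in Section~5.1) is a signal that something is wrong with the input, not that an extra layer of direct-product structure is required.

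The genuine gap is that item~4 of Proposition~1, $E_1^*(G,g) = R_1(G,g)$, is false in general, and your argument for Part~1 rests entirely on it. Combining items~1 and~2 of that proposition, item~4 would force $C_G(g^G) = C_G(g)$ for every group $G$ and every $g \in G$, which already fails in $S_3$ with $g = (1\,2)$: there $C_{S_3}((1\,2)) = \{e,(1\,2)\}$ while $C_{S_3}((1\,2)^{S_3}) = Z(S_3) = \{e\}$. Concretely, $(1\,2) \in R_1(S_3,(1\,2))$ but $(1\,2) \notin E_1^*(S_3,(1\,2))$; take $x = (1\,2\,3)$ in the defining condition and compute $[x(1\,2),(1\,2)] = (1\,3\,2) \neq (1\,2\,3) = [x,(1\,2)]$. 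Taking $G$ trivial and $H = S_3$, this same computation gives a counterexample to Part~1 of Conjecture~6 itself, so no proof strategy can succeed there. Part~2, by contrast, really is true and follows immediately from Theorem~5 (equivalently, Proposition~1 items 1--3) applied to $G \times H$: one has $E_1^*(G\times H,(u,v)) = C_{G\times H}\bigl((u,v)^{G\times H}\bigr)$, which is the centralizer of a normal subgroup and hence normal, and it is contained in $C_{G\times H}((u,v)) = {}^*E_1(G\times H,(u,v))$. Neither your decomposition nor item~4 is needed for that half; your route through Part~1 to get the containment is unnecessary and, as noted, unavailable.
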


\subsection{Attempted Proofs of Conjecture 6}
\subsubsection{Property $1$}

Let $G \times H$ be a group and let $(u,v) \in G \times H$. Let $(a,b) \in R_1(G \times H, (u,v))$. Then we know that $[(a,b)\,,\,(u,v)] = (1,1)$. To show that $(a,b) \in E_1^*(G \times H,(u,v))$, we must show that \\ $[(x,y)(a,b)\,,\,(u,v)] = [(x,y)\,,\,(u,v)]$ for all $(x,y) \in G \times H$. Note that $(x,y) \in R_1(G \times H, (u,v))$ or $(x,y) \not\in R_1(G \times H, (u,v))$. We proceed by cases. \newline

\noindent \underline{Case 1:} Suppose that $(x,y) \in R_1(G \times H, (u,v))$. Then $[(x,y)\,,\,(u,v)] = (1,1)$. We know that \newline $R_1(G \times H, (u,v)) \leq G \times H$. Then $R_1(G \times H, (u,v))$ is closed under the binary operation of $G \times H$ and is closed under inverses. This means that for every $(x,y) \in G \times H$ where $(x,y) \in R_1(G \times H, (u,v))$, we have $(x,y)(a,b) \in R_1(G \times H, (u,v))$. Then since $(x,y)(a,b) \in R_1(G \times H, (u,v))$, it follows that \newline $[(x,y)(a,b)\,,\,(u,v)] = (1,1)$. Since $[(x,y)(a,b)\,,\,(u,v)] = (1,1)$ and $[(x,y)\,,\,(u,v)] = (1,1)$, we can conclude that $[(x,y)(a,b)\,,\,(u,v)] = [(x,y)\,,\,(u,v)]$. Therefore, $(a,b) \in E_1^*(G \times H,(u,v))$ when $(x,y) \in R_1(G \times H, (u,v))$. \newline

\noindent \underline{Case 2:} Suppose that $(x,y) \not\in R_1(G \times H, (u,v))$. We understand that this case would contain a direct proof of the claim if it were to be true, but the results of this case were unexpected. We started by considering $ [(x,y)(a,b)\,,\,(u,v)]$ and we attempted to manipulate it to equal $[(x,y)\,,\,(u,v)]$. Instead we found several different results with one of the attempts outlined below. Note that all of the results were obtained through expansion, multiplying on the left and right by inverses and inserting identity's within the commutator. Through manipulation we found $ [(x,y)(a,b)\,,\,(u,v)]= (a,b)^{-1}[(x,y)\,,\,(u,v)](a,b)$. This case had the commutator we wanted conjugated by $(a,b)$. Since $(a,b) \in R_1(G \times H, (u,v))$, then $[(a,b)\,,\,(u,v)] = (1,1)$. We attempted to start with $[(a,b)\,,\,(u,v)] = (1,1)$ and show $[(a,b)\,,\,(u,v)] = [(x,y)\,,\,(u,v)]$. By inserting the identity and multiplying on the left and right by inverses, we manipulated the commutators but were unable to show the equality we wanted.\\ Since we were unable to prove $R_1(G\times H, (u,v)) \subseteq E_1^*(G \times H, (u,v))$ generally, we attempted to prove the other containment, i.e., $E_1^*(G \times H, (u,v)) \subseteq R_1(G\times H, (u,v))$. We hoped in proving the other containment it would illustrate a technique to use in our first containment to show it holds generally. Let $G \times H$ be a group with $(u,v) \in G \times H$. Consider $(a,b) \in E^*_1(G \times H, (u,v))$. We started with $[(x,y)(a,b)\,,\,(u,v)] = [(x,y)\,,\,(u,v)]$ and wanted to show $[(x,y)(a,b)\,,\,(u,v)] = [(a,b)\,,\,(u,v)]$. Through two attempts, one by multiplying on the right by inverse elements and the other by multiplying on the left by inverse elements, we altered the commutator but were unable to show the equality. In addition, we were unable to find any helpful techniques to use in our initial containment proof. Thus we were unable to prove that $R_1(G\times H, (u,v)) = E_1^*(G \times H, (u,v))$.

\subsubsection{Property $2$}

It was verified that $E_1^*(G \times H,(u,v))$ is indeed a subgroup of $G \times H$. Thus to prove $E_1^*(G \times H,(u,v))$ is normal in $G \times H$, it suffices to show that $(g,h) \, E_1^*(G\times H, (u, v)) \, (g,h)^{-1} \subseteq E_1^*(G\times H, (u, v))$ for all $(g,h) \in G \times H$. Let $(a,b) \in E_1^*(G\times H, (u, v))$. We considered $[(x,y)(g,h)^{-1}(a,b)(g,h)\,,\, (u,v)]$ and attempted to show it equal to $[(x,y)\,,\,(u,v)]$. Within our expansion and manipulation of the commutator we found that $ [(x,y)(g,h)^{-1}(a,b)(g,h)\,,\, (u,v)]=((a,b)^{-1})^{(g,h)}((x,y)^{-1}(u,v)^{-1}(x,y))(a,b)^{(g,h)}(u,v)$. Notice in the last line how the commutator that we want is separated by $(a,b)^{(g,h)}$ and that $((a,b)^{-1})^{(g,h)}$ is on the left of the commutator. In addition, notice how we can rewrite the last line in the following way
\begin{align*}
    [(x,y)(g,h)^{-1}(a,b)(g,h)\,,\, (u,v)] &= ((a,b)^{-1})^{(g,h)}((x,y)^{-1}(u,v)^{-1}(x,y))(a,b)^{(g,h)}(u,v) \\
    &= ((a,b)^{-1})^{(g,h)}((u,v)^{-1})^{(x,y)}(a,b)^{(g,h)}(u,v).
\end{align*}
Observe that by definition $((u,v)^{-1})^{(x,y)} \in C_{G \times H}((u, v)^{G \times H})$. Thus it suffices to show that $((u,v)^{-1})^{(x,y)}$ and $((a,b)^{-1})^{(g,h)}$ commute or to show that $((u,v)^{-1})^{(x,y)}$ and $(a,b)^{(g,h)}$ commute. This is because if these elements commute, $((a,b)^{-1})^{(g,h)}$ and $(a,b)^{(g,h)}$ would cancel, resulting in $[(x,y),(u,v)]$ as desired. We attempted to show this held but were unable to prove commutativity in either case. Therefore we were unable to prove $E_1^*(G\times H, (u, v)) \triangleleft G \times H$ generally. \\

\noindent Because we were unsuccessful in proving our conjectures generally, we decided to find a direct product group for which the conjecture held. The following section discusses a specific group and its properties which we believe have effect on the conjectures being true.

\subsubsection{Conjecture 6 Example} 

Conjecture $6$ held for $K_4 \times S_3$, where $K_4$ is the Klein four-group and $S_3$ is the symmetric group of order six. We examined $K_4 \times S_3$ to see if any of the group properties provided illumination as to why the conjecture held for the example but not for the general direct product.\\

\noindent We proved $R_1(K_4 \times S_3, (c,(1\,3\,2)) = E_1^*(K_4 \times S_3, (c,(1\,3\,2))$ and $E_1^*(K_4 \times S_3, (c,(1\,3\,2))) \triangleleft K_4 \times S_3$.  The sets $R_1(K_4 \times S_3, (c,(1\,3\,2))$ and $E_1^*(K_4 \times S_3, (c,(1\,3\,2))$ were calculated using their respective set definitions.  We concluded that both $R_1(K_4 \times S_3, (c,(1\,3\,2))$ and $E_1^*(K_4 \times S_3, (c,(1\,3\,2))$ are equal to the set consisting of elements $(e,e),\,(e,(1\,2\,3\,)),\, (e,(1\,3\,2)),\, (a,e),\, (a,(1\,2\,3)),\, (a,(1\,3\,2)),\, (b,e),\, (b, (1\,2\,3)),\, (b,(1\,3\,2)),\, (c,e),\, \\ (c,(1\,2\,3)),\, (c,(1\,3\,2)) .$ \\ 

\noindent Now to show that $E_1^*(K_4 \times S_3, (c,(1\,3\,2))) \triangleleft K_4 \times S_3$ we used the $12$ elements listed in the previous paragraph.  To verify that $E_1^*(K_4 \times S_3, (c,(1\,3\,2)))$ is a normal subgroup of $K_4 \times S_3$ we showed that for each $(x,y) \in E_1^*(K_4 \times S_3, (c,(1\,3\,2)))$ we have $(a,b)^{-1}(x,y)(a,b) \in E_1^*(K_4 \times S_3, (c,(1\,3\,2)))$ for all $(a,b) \in K_4 \times S_3$. Therefore the two conjectures $R_1(G\times H, (u,v)) = E_1^*(G \times H, (u,v))$ and $E_1^*(G \times H,(u,v)) \triangleleft G \times H$ held for $K_4 \times S_3$ with the element $(c,(1\,3\,2))$.\\

\noindent Observe that both $K_4$ and $S_3$ are a non metabelian and solvable groups. We were able to show that $K_4 \times S_3$ is a non metabelian, solvable group. For more information on metabelian and solvable groups, see \cite{Doerk} and \cite{Huppert}.\\

\noindent To justify why $K_4 \times S_3$ is a non metabelian group it suffices to show that there exists a normal subgroup, $H$, of $K_4 \times S_3$ such that $(K_4 \times S_3)/H$ is nonabelian. All possible subgroups of $K_4 \times S_3$ were determined and from that extensive list, the normal subgroups of $K_4 \times S_3$ were found. Consider the normal subgroup $H = \{(e,e),\, (a,e),\, (b,e),\, (c,e)\}$. Observe that we have the following
\begin{align*}
     (K_4 \times S_3)/H =& \{(e,e)\cdot \{(e,e),\, (a,e),\, (b,e),\, (c,e)\}, ..., (c,(1\,3\,2))\cdot \{(e,e),\, (a,e),\, (b,e),\, (c,e)\}\} \\
     =& \{\{(e,e),\, (a,e),\, (b,e),\, (c,e)\},\, \{(e,(1\,2)),\, (a,(1\,2)),\, (b,(1\,2)),\, (c,(1\,2))\}, \\
     &\, \{(e,(2\,3)),\, (a,(1\,2)),\,(b,(2\,3)),\, (c,(2\,3))\}, \{(e,(1\,3)),\, (a,(1\,3)),\, (b,(1\,3)),\, (c,(1\,3))\}, \\
     &\{(e,(1\,2\,3\,)),\, (a,(123)),\, (b, (1\,2\,3)),\, (c,(1\,2\,3))\}, \{(e,(1\,3\,2)),\, (a,(1\,3\,2)),\, (b,(1\,3\,2)),\, (c,(1\,3\,2))\}\} \\
     =& \{H,\, (e,(1\,2))H,\, (e,(2\,3))H,\, (e,(1\,3))H,\, (e,(1\,2\,3\,))H,\, (e,(1\,3\,2))H\}.
 \end{align*}

\noindent We want to show $[aH, bH]=H $ for all $aH, bH \in (K_4 \times S_3)/H$ in order to show that $(K_4 \times S_3)/H$ is abelian. It can be computed that $[(e,(1\,2))H\,,\,(e,(1\,3\,2))H] = (e,(1\,2\,3\,))H$. Because $(e,(1\,2\,3\,))H \neq H$ it follows that $(K_4 \times S_3)/H$ is nonabelian. Hence $K_4 \times S_3$ is a non metabelian group.\\

\noindent To justify why $K_4 \times S_3$ is a solvable group it suffices to show that there exists a chain of normal subgroups such that the condition of abelian quotient groups is met. For the sake of brevity, we will discuss only one chain of normal subgroups, but we found three chains. It can be verified that we have $$\{(e,e)\} \triangleleft \langle (a,e) \rangle \triangleleft \langle (a,(1\,3\,2))\rangle \triangleleft R_1(K_4 \times S_3, (c, (1\,3\,2))) \triangleleft K_4 \times S_3.$$ Now that we have a chain of normal subgroups, we must check for abelian quotient groups. \newline
Through computations it can be shown that
\begin{align*}
    (R_1(K_4 \times S_3, (c, (1\,3\,2))))/\langle (a, (1\,3\,2)) \rangle &= \{ \langle (a, (1\,3\,2)) \rangle, \, (b, (1\,2\,3))\langle (a, (1\,3\,2)) \rangle\}.
\end{align*}
It suffices to check that $\langle (a, (1\,3\,2)) \rangle$ and $(b, (1\,2\,3))\langle (a, (1\,3\,2)) \rangle$ commute. Then we calculate \\$[\langle (a, (1\,3\,2)) \rangle\,,\, (b, (1\,2\,3))\langle (a, (1\,3\,2)) \rangle]$ to be $\langle (a, (1\,3\,2)) \rangle$ which proves commutativity. Therefore \\$(R_1(K_4 \times S_3, (c, (1\,3\,2))))/\langle (a, (1\,3\,2)) \rangle$ is abelian. Similar work was done to show the other quotient groups are abelian. Hence $K_4 \times S_3$ is solvable. \\

\noindent After examination of the group properties of $K_4 \times S_3$, we conclude solvability of a direct product of groups may aid in the proving of the conjectures generally.  For the future we suggest attempting using non metabelian and solvable direct products.

\section{Conclusion}

In this article we provided two characterizations, Theorem $4$ and Theorem $5$, of centralizer-like subgroups associated with $n$-Engel words in a direct product of groups. Future topics to explore include prove the isomorphism theorems for centralizer-like subgroups associated with the $n$-Engel word in a direct product of groups and/or semi-direct products, finding a counterexample for Conjecture 6, and characterizing centralizer-like subgroups associated with $n$-Engel words inside a semi-direct product of groups.\\

\noindent This project was supported by the NSF DMS-1451801 during the Summer REU at the University of Wisconsin-Eau Claire (UWEC). Special thanks to Dr. Christopher Davis at UWEC, for his helpful insight regarding the direct product of words.

\newpage


\begin{thebibliography}{6}

\bibitem{Doerk}
K. Doerk and T. Hawkes, \textit{Finite Soluble Groups}. De Gruyter Expositions in Mathematics 4, De Gruyter, Berlin, New York, 1992.

\bibitem{Dummit}
David Steven Dummit and Richard M. Foote, \textit{Abstract Algebra.} Englewood Cliffs, N.J.: Prentice Hall, 1991.

\bibitem{Huppert}
B. Huppert and N. Blackburn, \textit{Finite Groups II}, Springer Verlag, Berlin, Heidelberg, New York, 1967.

\bibitem{Kappe}
Luise-Charlotte Kappe and Partick M. Ratchford,
\textit{On Centralizer-like Subgroups Associated with the $n$-Engel Word}.
Algebra Colloquim, 1999, pp. 1-8.

\bibitem{Macdonald}
I.D. Macdonald, Some examples in the theory of groups, in: \textit{Mathematical Essays Dedicated to A.J. MacIntyre}, Ohio University Press, 1970, pp. 263-269.

\bibitem{McClellan}
John McClellan III and Brandon Tlachac,
\textit{On Centralizer-like Subgroups Associated with the $n$-Engel Word Inside Direct Products}.
July 2017, pp. 1-8. Preprint.


\end{thebibliography}
\end{document}